\documentclass{article}
\usepackage{csquotes}
\usepackage[utf8]{inputenc}
\usepackage{biblatex}
\usepackage[english]{babel}
\usepackage[margin=1in]{geometry}
\usepackage{amsmath,amssymb,amsfonts,amsthm,mathtools}
\usepackage{braket} 
\usepackage{xcolor}
\usepackage{tikz}
\usetikzlibrary{fit}
\usepackage{quantikz}
\usepackage[inline]{enumitem}

\usepackage{hyperref}
\usepackage[nameinlink]{cleveref}

\hypersetup{colorlinks=true,linkcolor=blue,filecolor=magenta,urlcolor=cyan}
\newtheorem{theorem}{Theorem}[section]

\newtheorem{lemma}[theorem]{Lemma}
\theoremstyle{remark}

\theoremstyle{definition}
\newtheorem{definition}{Definition}[section]
\newtheorem{prop}{Proposition}[section]

\newcommand{\R}{\mathbb{R}}
\newcommand{\C}{\mathbb{C}}
\newcommand{\A}{\mathbb{A}}
\newcommand{\Z}{\mathbb{Z}}
\newcommand{\Q}{\mathbb{Q}}
\newcommand{\N}{\mathbb{N}}
\newcommand{\sinc}[1]{\text{sinc}({#1})}
\title{Bandwidth limited functions on locally compact abelian groups}
\author{Jean-Luc Brylinski \thanks{The author had many helpful discussions with Ranee Brylinski and with Patrick Delorme. ChatGPT pointed out several typos}}
\date{September 2026}
\DeclareUnicodeCharacter{02B9}{\ensuremath{'}}
\begin{document}

\maketitle
\begin{abstract}
This mostly expository paper gives a unified treatment of many interpolation results for bandwidth-limited functions on locally compact abelian groups, based on 
reproducing kernel Hilbert spaces. It includes an abstract interpolation theorem phrased in terms of frames.
\end{abstract}

\section{Introduction}\label{sec:intro}

This mostly expository article presents several  theorems on interpolation of functions  on $]R^n$ whose Fourier transform is constrained to lie in a given measurable set of finite volume, which we call the allowable band.

Such an interpolation theorem is well-known as the Whitttaker–Shannon interpolation formula \cite{Whit} \cite{shannon} when the band is an interval in the real line. Peterson-Middleton extended that sampling theorem to parallelepipeds  of arbitrary dimension.\cite{P-M}. A further generalization to locally
compact abelian groups was proved by Kluvanek\cite{kluvanek} and by  Faridani \cite{faridani}.

In this paper we present a uniform treatment of these interpolation results based on  a simple general construction of the structure of a reproducing kernel Hilbert space (RKHS) on the space of band-limited functions.

A new  aspect here is that the band can be an arbitrary measurable subset of finite volume, not necessarily bounded. We give examples when the locally compact abelian group is $\Q_p$ or the ring of of adeles $\A$, which may be of independent interest.

We also propose an abstract sampling theorem which assumes the existence of a frame made up of coherent states.

An advantage of the  RKHS method is to allows  the construction of a feature map from a data set to a Hilbert space, a standard method in kernel based machine learning, which  has many applications, including the  representer theorem, which is well-known in Kernel machine learning \cite{S-S}.

\section{Construction of the kernel}
We denote the hermitian inner product on any $L^2(X,\mu)$ by
\begin{equation}
\langle f \mid g\rangle = \int_{X} f(x)\, \overline{g(x)}\, d\mu(x).
\end{equation}
We recall the Fourier transform on $\R^n$. For a function $f$ in $L^1(\R^n)\cap L^2(\R^n)$, its Fourier transform, denoted by $F$ or by $\mathcal F(f)$, is the function $F$ defined by

\begin{equation}
F(k)=\int_{\R^n} f(x)\, e^{-2\pi i\, k\cdot x}\, dx
\end{equation}

This an $L^2$-function and the linear map  $\mathcal F$ extends to a linear map from $L^2(\R^n)$ to itself, Parseval's theorem says that $\mathcal F$ is an isometry whose inverse is given by 

\begin{equation}
f(x)=\int_{\R^n} F(k)\, e^{2\pi i\, k\cdot x}\, dk
\end{equation}

Now we say that the (frequency) band of $f$ is contained in a measurable subset $D\subset \R^n$ if the support of the Fourier transform $F$ is contained in $D$, which means that the restriction of $F$ to $\R^n\setminus D$ is zero. We always assume that $D$ has finite Euclidean volume, but it need not be bounded. Then $1_D$ is an $L^2$ function, whose $L^2$ norm is the square root of the volume of $D$.

The condition that $ F$ is supported on $D$ means exactly that $F=1_D.\, F$ where $1_D$ is the indicator function of $D$.
Note that $1_D$ is clearly in $L^{\infty}$ without any assumption on $D$.
Now the inverse Fourier transform of the product $1_D.\, F$ is the convolution product $g_D\star f$ where $g_D$ is the inverse Fourier transform of $1_D$. This is a general feature of Fourier analysis. So we have

\begin{equation}
(g_D\star f)\, (x)=\int_{\R^n}\, g_D(y)f(x-y)dy
\end{equation}

So by applying the inverse Fourier transform $\mathcal F^{-1} $ to the equation $F=1_D.\, F$ we get the equation

\begin{equation}
f(x)=\int_{\R^n} K(x,y)\, f(y)\, dy,\qquad \text{where}\qquad K(x,y)=g_D(x-y)
\end{equation}

This is what is called a reproducing kernel equation: $f$ actually has well-defined values at all points, and the value $f(x)$ is given by a continuous linear combination of all the other values $f(y)$. This implies that the Hilbert space $\mathcal H_D$ is a reproducing kernel Hilbert space with kernel $K(x,y)$ (see for instance \cite{lan} \cite{S-S}). The integral operator

\begin{equation}
f\mapsto \int_{\R^n} K(x,y)\, f(y)\, dy
\end{equation}
is the projection from $\mathcal H$ to its closed Hilbert subspace $\mathcal H_D$.

We note that the restriction of $\mathcal F$ to $\mathcal H_D$ is an isometry onto the subspace $L^2(D)$ of $\mathcal H$, which is the space of $L^2$ functions on $D$ for the induced Lebesgue measure on $D$.
Since the Hilbert space $\mathcal H$ is an RKHS, it contains the coherent states $phi_y$ where

\begin{equation}
\phi_y(x)=K(y,x)
\end{equation}

and we have the formula for their $L^2$ scalar products:

\begin{equation}
<\phi_x|\phi_y>=K(y,x)
\end{equation}

Evaluation of a function at a point is given by the inner product with the corresponding coherent state.

\begin{equation}
f(x)=\langle f \mid \phi_x\rangle
\end{equation}

Thus evaluation of a function at $x$ is given by a continuous linear functional on $\mathcal H_D$.

We summarize these findings below

\begin{theorem} Each element of $\cal H_D$ has a unique continuous representative given by 
\begin{equation}
f(x)=\int_{R^n}\, \hat{f}(k)e^{2\pi i x.k}dk
    \end{equation}
Then $\mathcal H_D$ is an RKHS with reproducing kernel $K(x,y)$. We have $||f||_{\infty}\leq V^{1/2}||f||_2.$

\end{theorem}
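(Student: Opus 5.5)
The plan is to work entirely on the Fourier side, using that $\mathcal F$ restricts to an isometry of $\mathcal H_D$ onto $L^2(D)$. For $f\in\mathcal H_D$ write $\hat f=F\in L^2(D)$. The key observation is that $D$ has finite volume $V$, so Cauchy--Schwarz gives
\begin{equation}
\int_{\R^n}|\hat f(k)|\,dk=\int_D |\hat f(k)|\cdot 1\,dk\leq V^{1/2}\,\|\hat f\|_2=V^{1/2}\|f\|_2 .
\end{equation}
Thus $\hat f\in L^1(\R^n)\cap L^2(\R^n)$. For an $L^1$ function the inversion integral $\tilde f(x)=\int \hat f(k)e^{2\pi i x\cdot k}dk$ converges absolutely for every $x$. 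It is continuous in $x$ by dominated convergence, with dominating function $|\hat f|$, and it satisfies $|\tilde f(x)|\leq V^{1/2}\|f\|_2$ for all $x$. This already yields the sup-norm bound, once we know that $\tilde f$ represents $f$.

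The next step is to identify $\tilde f$ with $f$ almost everywhere. Here the $L^2$ inverse Fourier transform is defined by extension from $L^1\cap L^2$ (Parseval), and for $\hat f\in L^1\cap L^2$ it agrees a.e.\ with the pointwise integral. I would justify this carefully. Take truncations $\hat f_R=\hat f\,1_{|k|\leq R}$, which converge to $\hat f$ both in $L^1$ and in $L^2$. Their inverse transforms converge uniformly to $\tilde f$, by the $L^1$ bound above, and in $L^2$ to $f$. Passing to a subsequence converging a.e., we get $f=\tilde f$ a.e.

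Uniqueness of the continuous representative follows because two continuous functions equal a.e.\ agree everywhere, since nonempty open sets have positive measure. I expect this identification step, rather than the estimate itself, to be the main point needing care.

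For the RKHS statement, evaluation $f\mapsto\tilde f(x)$ is a bounded linear functional of norm at most $V^{1/2}$ by the estimate, which is the defining property of an RKHS. To identify the kernel, write
\begin{equation}
\tilde f(x)=\int_D \hat f(k)e^{2\pi i x\cdot k}dk=\langle \hat f\mid 1_D e^{2\pi i x\cdot(\cdot)}\rangle_{L^2}.
\end{equation}
Applying Parseval converts this into $\langle f\mid\phi_x\rangle$, where $\phi_x$ is the inverse Fourier transform of $1_D(k)e^{-2\pi i x\cdot k}$. That inverse transform is $y\mapsto g_D(y-x)$, which is $\overline{g_D(x-y)}=\overline{K(x,y)}$ because $\overline{g_D(u)}=g_D(-u)$ ($1_D$ is real). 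Hence
\begin{equation}
\tilde f(x)=\int K(x,y)f(y)\,dy,
\end{equation}
in agreement with the reproducing formula derived before the theorem. Since $\phi_x\in\mathcal H_D$, with $\hat\phi_x$ supported in $D$, this identifies $K$ as the reproducing kernel.
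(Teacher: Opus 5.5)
Your proof is correct, but it runs on the Fourier side, while the paper's argument runs on the spatial side.

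The paper applies $\mathcal F^{-1}$ to $\hat f=1_D\hat f$ to get the convolution identity $f=g_D\star f$, i.e.\ $f(x)=\int K(x,y)f(y)\,dy$. It takes the right-hand side as the pointwise value; this is an integral of $f$ against the $L^2$ function $y\mapsto g_D(x-y)$, hence defined for every $x$. So the kernel appears at once, and the bound is then Cauchy--Schwarz against $\phi_x$, using $\|\phi_x\|_2^2=K(x,x)=V$.

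You instead get continuity and the sup bound directly from $\hat f\in L^1$ (because $D$ has finite volume), via the absolutely convergent inversion integral, and you extract the kernel only at the end. Your Parseval step $\int_D\hat f(k)e^{2\pi i x\cdot k}\,dk=\langle f\mid\phi_x\rangle=(g_D\star f)(x)$ is exactly what justifies the paper's convolution identity for $L^2$ functions.

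What your route buys is an actual proof, using only dominated convergence and Plancherel, of the a.e.\ identification, continuity and uniqueness. The paper passes over these with ``$f$ actually has well-defined values at all points''. What the paper's route buys is that the translation-invariant kernel $g_D(x-y)$ appears immediately, which is the form reused for coherent states and for general locally compact abelian groups.

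Two small points:
\begin{itemize}
\item The truncation step is harmless but unnecessary, since $\hat f$ already lies in $L^1\cap L^2$.
\item In your display the second slot should be $1_De^{-2\pi i x\cdot(\cdot)}$, because the inner product is conjugate-linear in the second argument. That is in fact the function you go on to invert, giving $\phi_x(y)=g_D(y-x)=K(y,x)$. This is the correct kernel section: the paper's literal definition $\phi_y(x)=K(y,x)$ swaps the indices, which matters only when $D\neq -D$.
\end{itemize}
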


We have proved all these facts,,except for the last one, which follows from Cauchy-Schwartz.

One can think of the coherent state $\phi_x$ as a smoothing  of the Dirac delta function centered at $x$. This kind of smoothing is often used in data analysis and in machine learning. In fact, a central tool in machine learning is the \underline{feature map} $\Phi:\R^n\to\mathcal H_D$ which gives a physical representation of $\R^n$, hence of any dataset inside $\R^n$, inside the Hilbert space $\mathcal H$.

\begin{equation}
\Phi(x)=\phi_x
\end{equation}

The topological properties of the feature map (Lipschitz, smoothness, etc) and  the quality of the representation of he dataset can be  decoded from the properties of the kernel $K(x,y)$. In  our situation we have
 \begin{theorem}
The feature map is an injective continuous map from $\R^n$ to $\mathcal H_D$
If further the function $||k||$ is square-summable on $D$ then $\Phi $ is Lipschitz with Lipschitz   constant $2\pi \int_D\, ||k||^2 dk$.

 \end{theorem}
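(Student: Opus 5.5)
The plan is to move everything to the frequency side via the Fourier isometry $\mathcal F:\mathcal H_D\to L^2(D)$. Since $\phi_y(x)=K(y,x)=g_D(y-x)$ and $g_D=\mathcal F^{-1}(1_D)$, the Fourier transform of the coherent state $\phi_y$ is, up to the sign convention in the kernel, the function $k\mapsto 1_D(k)e^{-2\pi i k\cdot y}$ in $L^2(D)$. Parseval then gives
\begin{equation*}
\|\phi_x-\phi_y\|_2^2=\int_D \bigl|e^{-2\pi i k\cdot x}-e^{-2\pi i k\cdot y}\bigr|^2\,dk ,
\end{equation*}
and all three claims will be read off from this single formula.

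\textbf{Continuity.} The integrand is bounded by $4\cdot 1_D$, which is integrable because $D$ has finite volume. It tends to $0$ pointwise as $y\to x$. Dominated convergence therefore gives $\|\phi_x-\phi_y\|_2\to 0$.

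\textbf{Injectivity.} Suppose $\phi_x=\phi_y$. Then $e^{-2\pi i k\cdot(x-y)}=1$ for almost every $k\in D$, so $k\cdot(x-y)\in\Z$ almost everywhere on $D$. If $x\neq y$, this set is contained in the countable union of hyperplanes $\{k:\ k\cdot(x-y)=m\}$, $m\in\Z$, which has Lebesgue measure zero. This forces $D$ to have measure zero.

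So injectivity holds exactly when $D$ has positive measure, an implicit nondegeneracy assumption I will state explicitly. (If $\mathrm{vol}(D)=0$, then $\mathcal H_D=0$ and $\Phi$ is constant.) This is the only genuinely non-routine point, and the hyperplane argument settles it.

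\textbf{Lipschitz bound.} Use $|e^{i\alpha}-e^{i\beta}|\le|\alpha-\beta|$ together with Cauchy--Schwarz:
\begin{equation*}
\bigl|e^{-2\pi i k\cdot x}-e^{-2\pi i k\cdot y}\bigr|\le 2\pi|k\cdot(x-y)|\le 2\pi\|k\|\,\|x-y\| .
\end{equation*}
Substituting into the Parseval formula gives
\begin{equation*}
\|\phi_x-\phi_y\|_2\le 2\pi\Bigl(\int_D\|k\|^2\,dk\Bigr)^{1/2}\|x-y\| .
\end{equation*}
This is finite precisely under the hypothesis that $\|k\|$ is square-integrable on $D$. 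The natural Lipschitz constant is therefore $2\pi\bigl(\int_D\|k\|^2dk\bigr)^{1/2}$. The statement writes the constant without the square root; I would prove the square-root version and note that it is the sharp form coming from this estimate. The statement's constant is not itself implied when $\int_D\|k\|^2dk<1$, so the discrepancy appears to be a typo.
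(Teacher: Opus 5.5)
Your proof is correct and follows the paper's route. Parseval gives $\|\Phi(x)-\Phi(y)\|_2^2=\int_D|e^{-2\pi i k\cdot x}-e^{-2\pi i k\cdot y}|^2\,dk$, continuity and injectivity are read off from it (the paper via $2V-2\Re(g_D(x-y))$ and continuity of $g_D$, you via dominated convergence), and the Lipschitz bound comes from the chord--arc inequality plus Cauchy--Schwarz.

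Your version also repairs three slips in the paper's own argument:
\begin{enumerate}[label=(\roman*)]
\item The integrand vanishes on the countable family of hyperplanes $k\cdot(x-y)\in\Z$, not on a single hyperplane.
\item Injectivity needs the assumption $V>0$.
\item The paper's displayed bound $\|\Phi(x)-\Phi(y)\|_2^2\le 2\pi(\int_D\|k\|^2dk)\|x-y\|$ conflates the squared and unsquared integrand. The correct constant is your $2\pi(\int_D\|k\|^2dk)^{1/2}$, and the stated constant genuinely fails: for $D\subset\R$ with $0<\int_D k^2\,dk<1$ one has $\|\phi_h-\phi_0\|_2/|h|\to 2\pi(\int_D k^2\,dk)^{1/2}>2\pi\int_D k^2\,dk$ as $h\to 0$.
\end{enumerate}
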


\begin{proof}
We have
\begin{equation}
  \|\Phi(x)-\Phi(y)\|_2^2
  = \int_D \left|e^{-2\pi i\, x\cdot k}-e^{-2\pi i\, y\cdot k}\right|^2\,dk
  = 2V - 2\Re\bigl(g_D(x-y)\bigr).
\end{equation}
Since $g_D$ is continuous and $g_D(0)=V$, we see that $\Phi$ is continuous. Furthermore, if $x\neq y$ the integrand only vanishes on the hyperplane perpendicular to $x-y$, which has measure $0$, so the integral is a positive number; hence $\Phi(x)\neq \Phi(y)$.

The integrand is bounded by $2\pi\, |(x-y)\cdot k|$, since a chord joining two points of a circle is shorter than an arc of the circle. This in turn is bounded by $2\pi\,\|x-y\|\,\|k\|$. So we have
\begin{equation}
  \|\Phi(x)-\Phi(y)\|_2^2 \le 2\pi\,\Bigl(\int_D \|k\|^2\,dk\Bigr)\,\|x-y\|.
\end{equation}
so that $\Phi$ is Lipschitz provided $\int_D \|k\|^2\,dk$ is finite.
\end{proof}

More generally, if $\int_D \|k\|^{2m}\,dk$ is finite, $\Phi$ is of class $C^m$. The integral representation
\begin{equation}
  f(x)=\int_D e^{2\pi i\, x\cdot k}\,\hat f(k)\,dk
\end{equation}
shows that $f$ extends to an entire analytic function on $\C^n$.

If the subset $D$ is contained in a ball of radius $R$, then we have the following inequality (by Cauchy-Schwarz):

\begin{equation}
  |f(a+ib)|\le V^{1/2}e^{2\pi R\,\|b\|}.
\end{equation}

which is a simple case of the Paley-Wiener theory.
\vskip .12 in
A number of optimizing problems for functions on datasets have a solution which is a finite linear combination of coherent states.

Also coherent states in quantum optics, also known as Glauber states, are used to model laser light.

We note that the kernel $K(x,y)$ is translation-invariant, reflecting the fact that $\mathcal H_D$ is a translation-invariant subspace of $\mathcal H$.

We thus have the relation between coherent states: $\phi_x(y)=\phi_0(y-x)$.

\section{The case of one-dimensional signals}

Take for the band $D$ a closed interval $[a,b]$. The inverse Fourier transform $g_D$ of the indicator function $1_{[a,b]}$ is

\begin{align*}
g_D(x)
&= \int_{a}^{b} e^{2\pi ikx}\, dk\\
&= -i(2\pi)^{-1}x^{-1}\bigl(e^{2\pi ibx}-e^{2\pi iax}\bigr)\\
&= \pi^{-1}x^{-1}e^{\pi i(a+b)x}\, \sin\left(\frac{(b-a)\pi x}{2}\right)\\
&= (b-a) e^{\pi i(a+b)x}\, \sinc{\frac{(b-a)\pi x}{2}}
\end{align*}

Here the signal $\sinc(x)=\frac{\sin x}{x}$ denotes a basic signal in Fourier theory.

So the RKHS $\mathcal H_D$ has the kernel

\begin{equation}
K(x,y)=(b-a) e^{\pi i(a+b)(x-y)}\, \sinc{\frac{(b-a)\pi (x-y)}{2}}
\end{equation}

    The most interesting case for signal processing is $a=-b$, since then we have  simply
    $g_D(x)=2b\, \sinc{bx}$.

    The coherent states are then
    
\begin{equation}
\phi_x(y)=K(x,y)=2b\,\sinc{\bigl(b\pi (x-y)\bigr)}
\end{equation}

    Now it is not hard to construct an ON basis for the Hilbert space $\mathcal H_D$ which is made up of coherent states. 
    
    \begin{prop}
    The coherent  states $(2b)^{-1}\phi_{(2b)^{-1}n}$ form an orthonormal basis of the Hilbert space $\mathcal H_D$.
    \end{prop}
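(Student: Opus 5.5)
Rather than working with the sinc functions directly, I will transport the problem to the frequency side, where it becomes the classical Fourier basis on an interval. Take $D=[-b,b]$, of length $2b$. As noted above, $\mathcal F$ restricts to an isometry from $\mathcal H_D$ onto $L^2(D)$. An orthonormal basis of $\mathcal H_D$ is therefore the same as an orthonormal basis of $L^2([-b,b])$ pulled back by $\mathcal F^{-1}$. So the first step is to compute the Fourier transform of a coherent state. From $\phi_x(y)=K(x,y)=g_D(x-y)$ and the integral representation of $g_D$, we get
\begin{equation*}
\mathcal F(\phi_x)(k)=1_D(k)\,e^{-2\pi i\,k x},
\end{equation*}
with the sign fixed by the paper's conventions. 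Conjugation issues do not affect orthonormality. I will not rely on the displayed sinc normalization, whose constants are inconsistent in the text. Instead I work with the definition $\phi_x=g_D(\cdot-x)$, $g_D=\mathcal F^{-1}(1_D)$, and check the constants directly.

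The second step is to specialize to $x_n=n/(2b)$. This gives $\mathcal F(\phi_{x_n})(k)=1_D(k)\,e^{-2\pi i n k/(2b)}$, which is the standard exponential system on an interval of length $L=2b$. Its inner products are computed directly:
\begin{equation*}
\langle \phi_{x_n}\mid\phi_{x_m}\rangle=\int_{-b}^{b}e^{-2\pi i (n-m)k/(2b)}\,dk = 2b\,\delta_{nm}.
\end{equation*}
Hence $\{\phi_{x_n}\}$ is an orthogonal system with $\|\phi_{x_n}\|_2=(2b)^{1/2}$. This is where the normalization matters. The orthonormal family is $(2b)^{-1/2}\phi_{x_n}$, not $(2b)^{-1}\phi_{x_n}$. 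I would state the proposition with that corrected factor, or equivalently note that $(2b)^{-1}\phi_{x_n}$ is orthonormal for the rescaled inner product $2b\langle\cdot\mid\cdot\rangle$. The other possibility is that the intended convention uses $\langle\phi_x\mid\phi_y\rangle=K(y,x)$ with $K(x,x)=2b$, and then the same correction applies.

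The third step is completeness, which is the only substantive point. Under $\mathcal F$ the question becomes the completeness of $\{e^{2\pi i n k/L}\}_{n\in\Z}$ in $L^2$ of an interval of length $L$. This is the classical completeness of the trigonometric system, via Fejér's theorem or Stone–Weierstrass together with density of continuous periodic functions in $L^2$. I will simply invoke it. An alternative with the same content is to check Parseval directly. Since $\langle f\mid\phi_{x_n}\rangle=f(x_n)$ by the reproducing property, completeness is equivalent to
\begin{equation*}
\|f\|_2^2=\frac{1}{2b}\sum_n |f(n/(2b))|^2.
\end{equation*}
This identity is the Fourier-series Parseval identity for $\hat f\in L^2([-b,b])$.

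The main obstacle is not conceptual but bookkeeping. The signs in the exponent, the placement of $\pi$ inside the sinc, and the normalization constant must all be pinned down so that the sample spacing $1/(2b)$ and the factor $(2b)^{-1/2}$ come out exactly. I would therefore recompute $g_D(x)=\sin(2\pi b x)/(\pi x)$ directly before writing the final statement. As a consequence, the Whittaker–Shannon formula follows immediately from the basis expansion:
\begin{equation*}
f=\frac{1}{2b}\sum_n f(x_n)\,\phi_{x_n}.
\end{equation*}
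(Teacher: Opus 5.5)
Your proof is correct and is essentially the paper's argument: the paper likewise computes $\langle \phi_{(2b)^{-1}n}\mid\phi_{(2b)^{-1}m}\rangle = 2b\,\delta_{mn}$ from the kernel, and gets completeness by observing that the Fourier transforms $e^{-\pi i (n/b)k}1_D(k)$ are the standard plane-wave basis of $L^2([-b,b])$. Your normalization correction is also right: that same computation gives $\|\phi_{(2b)^{-1}n}\|_2^2=2b$, so the orthonormal family is $(2b)^{-1/2}\phi_{(2b)^{-1}n}$, and the factor $(2b)^{-1}$ in the statement (and in the paper's ``hence'') works only when $2b=1$.
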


    \begin{proof}

The inner products of two of these coherent states is
\begin{equation}
\langle \phi_{(2b)^{-1}n} \mid \phi_{(2b)^{-1}m}\rangle
= K\bigl((2b)^{-1}n,\,(2b)^{-1}m\bigr)
= 2b\,\sinc{\pi(n-m)}
= 2b\,\delta_{mn}.
\end{equation}

Hence the $(2b)^{-1}\phi_{(2b)^{-1}n}$ form an orthonormal system in $\mathcal H_D$. It remains to show that this ON system is complete. For this, we look at the images of the $\phi_{(2b)^{-1}n}$ under Fourier transform. The Fourier transform of $\phi_0=g_D$ is the indicator function $1_D$. Now each $\phi_{(2b)^{-1}n}$ is the translate of $\phi_0$ by $(2b)^{-1}n$, and we have the classical result:

\begin{lemma}
For $f\in \mathcal H=L^2(\R)$ with Fourier transform $\hat f(k)$, the Fourier transform of the function $f(x-s)$ is $e^{-2\pi i s k}\, \hat f(k)$.
\end{lemma}

So the Fourier transform of $\phi_{(2b)^{-1}n}(x)=\phi_0\bigl(x-(2b)^{-1}n\bigr)$ is
\[
 e^{-\pi i (n/b)k}\, 1_D(k).
\]
These plane waves form the usual ON basis of $L^2([-b,b])$.
\end{proof}

\begin{equation}
 f=\sum_{n\in\mathbb{Z}}\, \langle f \mid \phi_{b^{-1}n}\rangle\, \phi_{b^{-1}n}.
\end{equation}

Evaluating this equation at $x$ we get
\begin{align*}
 f(x)&=\sum_{n\in\mathbb{Z}}\, f\bigl(b^{-1}n\bigr)\, \phi_{b^{-1}n}(x)\\
 &=2b\sum_{n\in\mathbb{Z}}\, \sinc{\bigl(b\pi x-n\pi\bigr)}\, f\bigl(b^{-1}n\bigr).
\end{align*}

This is exactly the Whittaker--Shannon interpolation formula \cite{Whit}, showing that a function in the band-limited Hilbert space $\mathcal H_D$ can be completely recovered from its values on the two-sided arithmetic progression $n\in\mathbb{Z}$.

\section{The spherical case}

If the domain $D$ is invariant under the rotation group $O(n)$, then its inverse Fourier transform $g_D$ has the same invariance. We will consider the case $D=B_r$, the solid ball of radius $r$ centered at the origin. We then have from \cite{gelfand}

\begin{equation}
g_D(x)=r^{n/2}\,\lVert x\rVert^{-n/2}\, J_{n/2}\bigl(2\pi r\,\lVert x\rVert\bigr)
\end{equation}
where $J_{n/2}$ is a Bessel function of the first kind. It is the unique solution $y(x)$ of the differential equation

\begin{equation}
x^2\, \frac{d^2y}{dx^2}+x\,\frac{dy}{dx}+\left(x^2-\frac{n^2}{4}\right)y=0.
\end{equation}

which is bounded at the origin $x=0$ and has leading term $\frac{\vert\vert \vert\vert{n/2}}{\pi^{n/2}\Gamma(n/2+1)}$ in its  expansion there in powers of $x^{1/2}$.

Of course $J_{n/2}$ is in $L^2$ at infinity, as it behaves like the product of $x^{-1/2}$ with a sine wave.

\section{The case of a parallelepiped}

Assume that $D$ is a parallelepiped centered at the origin which is a fundamental domain for the lattice spanned by vectors $v_1,\cdots,v_n$.

 Then we have the equation
\begin{equation}
g_D(x)=V\, \prod_{j=1}^{n}\, \sinc{\pi x\cdot v_j}
\end{equation}
where $V$ is the volume of $D$.

To interpolate a function $f$ in $\mathcal H_D$, we introduce the dual basis $u_1,\cdots,u_n$ and the lattice $L$ spanned over $\mathbb{Z}$ by the $u_j$'s.

Hence we get the interpolation formula

\begin{equation}
f(x)=\sum_{y\in L} \prod_{j=1}^{n} \sinc{\pi\, (x-y)\cdot v_j}\, f(y)
\end{equation}
which recovers the Petersen–Middleton theorem \cite{P-M}.

\section{Locally compact abelian groups}  Our methods apply with little change to the situation where of $L^2(G)$ for an abelian locally compact group $G$ which is countable at infinity.
The dual group $\hat G$ is the group of characters $\chi:G\to\C^*$ equipped with the topology of uniform convergence on compact subsets. $\hat G$ is also a locally compact abelian group which is countable at infinity.Then we have a Fourier transform $\mathcal F$ from $L^2(G)$ to $L^2(\hat G)$ defined by the equation
\begin{equation}
\hat f(\chi)=\int_Gf(x) \overline{\chi(x)}d\mu(x)
\end{equation}

Here $\mu$ is a choice  of a Haar measure on $G$. Then there is a unique choice of a normalization $\nu$ of a  Haar measure on $\hat G$ such that the inverse of $\mathcal F$ maps a function $g(\chi)$ to the function
$x\mapsto\int_{\hat G}\, \chi(x)^{-1}g(\chi)d\nu(\chi)$.

Then for a compact subset $D$ in $\hat G$ we can define the Hilbert subspace $\mathcal H_D$ of $L^2(G)$ as the space of functions whose Fourier transform is supported on $D$. Then $\mathcal H_D$ is an RKHS with reproducing kernel $K(x,y)=g_D(xy^{-1})$ where $g_D$ is the inverse Fourier transform of the indicator function $1_D$.

To get an interpolation formula, we take $D$ to be a fundamental domain of some discrete co-compact subgroup $\Lambda$ of $\hat G$, assuming that such a subgroup can be found.. Then the dual subgroup is the subgroup $L$ of $G$ comprised  of the $x\in G$ such that $\chi(x)=1$ for all $\chi\in\Lambda$.

Then we have 

\begin{theorem}
The $V^{-1}\phi_{\lambda}$ for $\lambda\in L$ form an ON basis of the RKHS $\mathcal H_D$ and we have
\begin{equation}
f(x)=\sum_{\lambda \in L}\, g_D(x-\lambda)f(\lambda)
\end{equation}
where the series converges uniformly on $G$.
\end{theorem}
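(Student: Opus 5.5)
The plan is to transport the problem to the dual group with the Fourier transform $\mathcal F$, exactly as in the proof of the one-dimensional Proposition. By Plancherel, $\mathcal F$ restricts to an isometry from $\mathcal H_D$ onto $L^2(D,\nu)$. Here $V=\nu(D)$ is finite, since $D$ is a (measurable, relatively compact) fundamental domain for the co-compact lattice $\Lambda$. The coherent state $\phi_\lambda(x)=K(\lambda,x)$ is, up to complex conjugation/inversion conventions, the translate of $\phi_0=g_D$ by $\lambda$. By the analogue of the translation Lemma on $G$, its Fourier transform is $\overline{\chi(\lambda)}\,1_D(\chi)$, up to the paper's sign convention. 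So under $\mathcal F$ the family $\{\phi_\lambda\}_{\lambda\in L}$ becomes the family of characters of $\hat G$ given by evaluation at points of $L$, restricted to $D$.

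The first key step is to identify $L$ with the Pontryagin dual of the compact group $\hat G/\Lambda$. A character of $\hat G$ trivial on $\Lambda$ is, by Pontryagin duality, evaluation at some $x\in G$ with $\chi(x)=1$ for all $\chi\in\Lambda$, that is, at some $x\in L$. Hence $L\cong\widehat{\hat G/\Lambda}$, which is discrete since $\hat G/\Lambda$ is compact.

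Next, the map $D\to\hat G/\Lambda$ is a measure-preserving bijection (up to null sets) onto the quotient, with the quotient Haar measure of total mass $V$. Therefore $L^2(D,\nu)\cong L^2(\hat G/\Lambda)$. By Peter–Weyl for compact abelian groups (equivalently, Plancherel on the compact group), the characters $\chi\mapsto\chi(\lambda)$, $\lambda\in L$, form an orthogonal basis of $L^2(\hat G/\Lambda)$, each of squared norm $V$. Pulling back gives that the $\mathcal F(\phi_\lambda)$ form an orthogonal basis of $L^2(D)$. Hence the $\phi_\lambda$ form an orthogonal basis of $\mathcal H_D$. Their norms can be read off directly from the reproducing property: $\|\phi_\lambda\|^2=\langle\phi_\lambda\mid\phi_\lambda\rangle=K(\lambda,\lambda)=g_D(0)=V$. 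Orthogonality also follows independently from $\langle\phi_\lambda\mid\phi_\mu\rangle=g_D(\mu-\lambda)=\int_D\chi(\lambda-\mu)\,d\nu$, which vanishes for $\lambda\neq\mu$ because $\chi\mapsto\chi(\lambda-\mu)$ is a nontrivial character of $\hat G/\Lambda$. So the correctly normalized basis is $V^{-1/2}\phi_\lambda$, and I expect the statement's $V^{-1}$ to be meant in this sense; the expansion formula is unaffected by this.

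Expanding $f\in\mathcal H_D$ in this basis gives
\begin{equation}
f=\sum_{\lambda\in L}V^{-1}\langle f\mid\phi_\lambda\rangle\,\phi_\lambda=\sum_{\lambda\in L}V^{-1}f(\lambda)\,\phi_\lambda,
\end{equation}
with convergence in $L^2$, using $f(\lambda)=\langle f\mid\phi_\lambda\rangle$. Uniform convergence on $G$ then comes from the sup-norm bound of the first Theorem, $\|h\|_\infty\le V^{1/2}\|h\|_2$ for $h\in\mathcal H_D$. Applied to the tails of the series, which lie in $\mathcal H_D$, this shows that $L^2$ convergence of partial sums forces uniform convergence of their continuous representatives. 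Evaluating at $x$ and writing $\phi_\lambda(x)=g_D(x-\lambda)$ (up to conjugation) yields the interpolation formula, with the factor $V^{-1}$ absorbed according to the normalization of $g_D$.

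The main obstacle is the measure-theoretic identification of $L^2(D)$ with $L^2(\hat G/\Lambda)$ together with the duality $L=(\hat G/\Lambda)^\wedge$. This needs $D$ to be a measurable fundamental domain and a compatible normalization of the Haar measures on $\hat G$, $\Lambda$ (counting measure) and the quotient. I would handle it with Weil's quotient integral formula $\int_{\hat G}h\,d\nu=\int_{\hat G/\Lambda}\sum_{\xi\in\Lambda}h(\chi\xi)\,d\dot\nu$, applied to $h=|F|^21_D$.
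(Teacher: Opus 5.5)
Your proposal is correct and follows essentially the argument the paper intends; the paper gives no separate proof and only says the one-dimensional method carries over. That argument is: Fourier-transport the $\phi_\lambda$ to the characters $\chi\mapsto\chi(\lambda)$ on $D$, get completeness from $L^2(D)\cong L^2(\hat G/\Lambda)$ together with $L=(\hat G/\Lambda)^\wedge$, expand, and use $\|h\|_\infty\le V^{1/2}\|h\|_2$ for uniform convergence; you are also right that the orthonormal basis is $V^{-1/2}\phi_\lambda$. One correction to your last step: the factor in $f(x)=V^{-1}\sum_{\lambda\in L}g_D(x-\lambda)f(\lambda)$ cannot be absorbed into $g_D=\mathcal F^{-1}(1_D)$, which is fixed with $g_D(0)=V$. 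The displayed formula therefore holds as written only when the Haar measure is normalized so that $V=1$, consistent with the paper's own parallelepiped formula, whose interpolating functions are $V^{-1}g_D(x-y)$.
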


\section{The p-adic case}
We normalize the Haar measure $\mu$ on $G=\mathbb{Q}_p$ by the condition $\mu(\mathbb{Z}_p)=1$.

Recall that for $G=\mathbb{Q}_p$ one can identify the dual group $\widehat{\mathbb{Q}}_p$ with $G$ using the character $\psi:\mathbb{Q}_p\to\mathbb{C}^*$ defined by $\psi(x)=e^{2\pi i x_0}$, where $x_0$ is any rational number such that $x-x_0$ has valuation $\ge 0$. Then the group homomorphism $\Q_p\to \widehat{\Q_p}$ which sends $y$ to the character $x\mapsto\psi(xy)$ is a homeomorphism.

Now take the domain $D$ to be $D=p^m\Z_p$. Then the Fourier transform of $1_D$ is $p^{-m}\dot 1_{D^{\perp}}$, where $D^{\perp}=p^{-m}\Z_p$, and thus we have
\begin{equation}
K(x,y)=p^m\,\mathbf{1}_{p^m\mathbb{Z}_p}(x-y).
\end{equation}
Picking a discrete subset $S\subset \mathbb{Q}_p$ of representatives for the quotient $\mathbb{Q}_p/p^m\mathbb{Z}_p$, we see that the rescaled coherent states $p^{m/2}\phi_s=p^{m/2}\,\mathbf{1}_{s+p^m\mathbb{Z}_p}$ form an orthonormal basis of $L^2(D)$ as $s$ varies over $S$.

Then we have the interpolation formula.
\begin{prop}
\begin{equation}
f(x)=\sum_{s\in S}\,\mathbf{1}_{s+p^m\mathbb{Z}_p}(x)\, f(s).
\end{equation}
\end{prop}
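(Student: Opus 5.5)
The plan is to follow the general RKHS pattern of the earlier sections: first establish that the rescaled coherent states form an orthonormal basis of $\mathcal H_D$, then expand $f$ in that basis and evaluate pointwise using the reproducing property $f(x)=\langle f\mid\phi_x\rangle$. In the $p$-adic setting every step is explicit, because the kernel is a multiple of the indicator of the subgroup $p^m\Z_p$ (up to the normalization of $K$, which I take from the displayed formula). The main work is in the first step.

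\emph{Step 1: identify $\mathcal H_D$.} A function $f\in L^2(\Q_p)$ lies in $\mathcal H_D$ exactly when $f=g_D\star f$, where $g_D$ is the inverse Fourier transform of $1_D$. By the displayed formula, $g_D$ is a constant multiple of $\mathbf 1_{p^m\Z_p}$. Convolving with the normalized indicator of the compact open subgroup $p^m\Z_p$ is averaging over cosets $x+p^m\Z_p$. So $f\in\mathcal H_D$ if and only if $f$ is constant on each coset of $p^m\Z_p$; equivalently, $f$ is its own coset average. In particular, the continuous representative of $f$ is locally constant, and its value $f(s)$ at a representative is meaningful.

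\emph{Step 2: orthonormal basis.} For $s\neq s'$ in $S$ the cosets $s+p^m\Z_p$ and $s'+p^m\Z_p$ are disjoint, so the functions $\mathbf 1_{s+p^m\Z_p}$ are mutually orthogonal. Each has squared norm $\mu(p^m\Z_p)=p^{-m}$, so $p^{m/2}\mathbf 1_{s+p^m\Z_p}=p^{m/2}\phi_s$ (with the stated normalization) is a unit vector. For completeness, any $f\in\mathcal H_D$ orthogonal to all of them has zero integral over every coset. Since $f$ is constant on cosets by Step 1, this forces $f=0$.

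\emph{Step 3: expansion and evaluation.} Expanding $f$ in this basis gives
\[
f=\sum_{s\in S}p^{m}\langle f\mid \mathbf 1_{s+p^m\Z_p}\rangle\,\mathbf 1_{s+p^m\Z_p}.
\]
Because $f$ is constant on $s+p^m\Z_p$, we have $\langle f\mid \mathbf 1_{s+p^m\Z_p}\rangle=p^{-m}f(s)$, which yields the stated interpolation formula.

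The series converges in $L^2$. It also converges pointwise, and indeed trivially so: for a given $x$ exactly one term is nonzero, namely the one with $s$ the unique representative satisfying $x-s\in p^m\Z_p$. Alternatively, uniform convergence follows from the bound $\|f\|_\infty\le V^{1/2}\|f\|_2$ of the first theorem.

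The only real obstacle is Step 1: one has to check carefully the Fourier transform of $1_{p^m\Z_p}$ with respect to the character $\psi$ and the self-dual measure, and confirm that the constants make $g_D\star(\cdot)$ an idempotent averaging operator. Once that normalization is fixed, the remaining steps are the routine disjoint-coset computations above.
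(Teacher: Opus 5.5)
Your proposal is correct and follows essentially the paper's argument: the rescaled coherent states $p^{m/2}\mathbf{1}_{s+p^m\Z_p}$ form an orthonormal basis of $\mathcal H_D$ (the paper writes $L^2(D)$), and expanding $f$ in this basis and evaluating gives the formula, with your Step 1 and your completeness check filling in what the paper leaves unsaid. On the check you flag, the mismatch is in the subgroup and not only in the constants: the inverse Fourier transform of $\mathbf{1}_{p^m\Z_p}$ is $p^{-m}\mathbf{1}_{p^{-m}\Z_p}$, as the paper itself states, so the displayed kernel $p^m\mathbf{1}_{p^m\Z_p}(x-y)$ actually belongs to the band $p^{-m}\Z_p$, and taking the kernel as displayed, as you do, is the reading under which both the basis claim and the interpolation formula hold.
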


\section{The adelic case}

A similar construction applies to the ring of adeles $\A$, which contains $\Q$ as a discrete subgroup.

Recall that
\begin{equation}
\A=\R\times\prod_p\,\Q_p,
\end{equation}
where the [product is ] the restricted product: an adele $(x_{\infty},x_2,x_3,\dots)$ must have all but finitely many of its components $x_p$ belonging to $\Z_p$.

We can then put on $\A$ the Haar measure which is the product of the measure $dx$ on $\R$ and the normalized Haar measures on $\Q_p$.

The group $\Q$ embeds diagonally as a subgroup of $\A$, and it is discrete by the product formula in number theory.

Then $D=[-1/2,1/2]\times \prod_p \Z_p$ is a fundamental domain for $\A/\Q$, and $D$ has volume $1$.

The Fourier transform $g_D$ of $1_D$ factors as
\begin{equation}
g_D(x)=\sinc{\pi x_{\infty}}\,\prod_p \mathbf{1}_{\Z_p}(x_p),
\end{equation}
and the reproducing kernel $K(x,y)=g_D(x-y)$ factorizes similarly.

Then we have the interpolation formula
\begin{equation}
f(x)=\sum_{y\in \mathbb{Q}}\, \sinc{\pi(x_{\infty}-y_{\infty})}\,\prod_p \mathbf{1}_{\Z_p}(x_p-y_p)\, f(y),
\end{equation}
which expresses $f(x)$ as an infinite sum of the values $f(y)$ for $y\in \Q$.

\section{An abstract interpolation theorem}

Working in the context of a locally compact abelian group $G$, we can state
\begin{theorem}
Assume there is a sequence $x_1,x_2,\dots$ in $G$ such that the corresponding coherent states $\phi_{x_j}$ form a Riesz basis of the RKHS $\mathcal H_D$. Let $u_1,u_2,\dots$ be the dual Riesz basis, so that we have
$<\phi_j\vert u_k>=\delta_{jk}$.
Then for any $f\in\mathcal H_D$ we have
\begin{equation}
f(y)=\sum_j\,K(x_j,y)\, f(u_j).
\end{equation}
\end{theorem}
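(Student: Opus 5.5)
I will read the notation $f(u_j)$ in the statement as the pairing $\langle f\mid u_j\rangle$, extending the convention $f(x)=\langle f\mid\phi_x\rangle$ by which evaluation at a point is pairing with the coherent state $\phi_x$. Under that reading the formula is the expansion of $f$ in the Riesz basis $(\phi_{x_j})$, evaluated pointwise. I will write $\phi_j=\phi_{x_j}$. The plan has three steps: get the abstract expansion in $\mathcal H_D$, transfer it to pointwise values using the reproducing property, and check convergence.

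\textbf{Step 1 (abstract expansion).} Since $(\phi_j)$ is a Riesz basis of $\mathcal H_D$ with biorthogonal dual $(u_k)$, where $\langle \phi_j\mid u_k\rangle=\delta_{jk}$, every $f\in\mathcal H_D$ has the two unconditionally convergent expansions
\begin{equation*}
f=\sum_j \langle f\mid u_j\rangle\,\phi_j ,\qquad f=\sum_j \langle f\mid \phi_j\rangle\,u_j .
\end{equation*}
I will justify the first as follows. Riesz basis means $\phi_j=T e_j$ for some orthonormal basis $(e_j)$ and bounded invertible $T$. Then $u_j=(T^{-1})^{*}e_j$ is the unique biorthogonal system, and $f=T\sum_j\langle T^{-1}f\mid e_j\rangle e_j$. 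Since $\langle T^{-1}f\mid e_j\rangle=\langle f\mid u_j\rangle$, this is the first expansion.

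\textbf{Step 2 (pointwise evaluation).} I apply the continuous linear functional $f\mapsto f(y)=\langle f\mid \phi_y\rangle$ from the first theorem of the paper. Term by term, using $\phi_j(y)=K(x_j,y)$ from the definition of coherent states, this gives
\begin{equation*}
f(y)=\sum_j \langle f\mid u_j\rangle\,K(x_j,y),
\end{equation*}
which is the stated formula. The second expansion gives the companion interpolation formula
\begin{equation*}
f(y)=\sum_j f(x_j)\,u_j(y),
\end{equation*}
since $\langle f\mid\phi_j\rangle=f(x_j)$. This one genuinely recovers $f$ from its samples on the sequence $(x_j)$, so I would state it alongside the displayed version.

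\textbf{Step 3 (convergence).} The Riesz-basis series converges in $L^2$ norm, in any order. The bound $\|f\|_\infty\le V^{1/2}\|f\|_2$ from the first theorem holds equally for any locally compact abelian group $G$, because $|\langle f\mid\phi_y\rangle|\le\|f\|_2\,\|\phi_y\|_2$ and $\|\phi_y\|_2^2=K(y,y)=g_D(0)=\nu(D)$. Hence the partial sums also converge uniformly on $G$.

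The main obstacle is not analytic. It is pinning down the intended meaning of $f(u_j)$ and making sure the biorthogonality convention is applied on the correct side of the conjugate-linear inner product. With the paper's convention $\langle\phi_j\mid u_k\rangle=\delta_{jk}$, the coefficient of $\phi_j$ in $f$ is $\langle f\mid u_j\rangle$ and not its conjugate. I would verify this by pairing the expansion with $u_k$.
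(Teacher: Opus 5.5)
Your proposal is correct and is the argument the paper relies on; the paper gives no separate proof here, but it derives Whittaker--Shannon the same way. You expand $f=\sum_j\langle f\mid u_j\rangle\,\phi_{x_j}$ in the Riesz basis, apply the continuous evaluation functional $f\mapsto\langle f\mid\phi_y\rangle$, and read $f(u_j)$ as $\langle f\mid u_j\rangle$. Your companion formula $f(y)=\sum_j f(x_j)\,u_j(y)$ is the Riesz-basis case of the paper's later frame theorem, since there $S^{-1}\phi_j=u_j$.

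One caveat, inherited from the paper rather than from your reasoning: making $f(x)=\int K(x,y)f(y)\,dy$ consistent with $f(x)=\langle f\mid\phi_x\rangle$ and $\phi_x\in\mathcal H_D$ forces $\phi_{x_j}(y)=\overline{K(x_j,y)}=K(y,x_j)$. So the displayed formula with $K(x_j,y)$ is literally correct only when $g_D$ is real, e.g.\ when $D=-D$.
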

\vskip .12 in
Recall that a Riesz basis of $\mathcal H_D$ is a topological basis which is the transform of an orthonormal basis by an invertible   bounded  operator.
\vskip .12 in
It may be worth exploring if a Riesz basis of this type exists in the case $D$ is a solid sphere. This is equivalent under Fourier transform to asking whether $L^2(D,\mu_{D})$ has a Riesz basis made up of plane waves; such a Riesz basis is called an exponential Riesz basis. It is proved in \cite{I-K-T}  that exponential Riesz bases do not exist in case $D$ has at last one point of vanishing Gaussian curvature. Very recently \cite{K-N-O} proves that exponential Riesz bases do not exist for balls in $\R^n$ for $n\ge 2$.

However there is a more general notion, namely that of frames in a Hilbert space.

\begin{definition}
A sequence $\phi_1,\phi_2,\dots$ of vectors in a Hilbert space $\mathcal E$ is a frame if there exist constants $0<A\le B<\infty$ such that
\begin{equation}
A\Vert f\Vert^2\le \sum_j \bigl|\langle f \mid \phi_j\rangle\bigr|^2\le B\lVert f\Vert^2
\end{equation}
for each vector $f$ in $\mathcal E$.
\end{definition}

Now consider a sequence $y_1,y_2,\dots$ in $G$ and the corresponding elements $\phi_j=\phi_{y_j}$.

\begin{theorem}
Assume the sequence $y_1,y_2,\dots$ satisfies the above bounds so that the $\phi_j$ form a frame in $\mathcal H_D$ with constants $A$ and $B$. Then
\begin{equation}
S(f)=\sum_j f(y_j)\,\phi_j
\end{equation}
defines an invertible bounded positive operator on $\mathcal H_D$, and for all $f\in\mathcal H_D$ we have
\begin{equation}\label{uniform}
f(x)=\sum_j f(y_j)\,\bigl(S^{-1}\phi_j\bigr)(x).
\end{equation}
The first series converges in norm, and the second series converges absolutely  uniformly on compact subsets of $G$.
\end{theorem}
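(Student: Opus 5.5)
The plan is the standard frame-operator argument, combined with the reproducing property $f(y)=\langle f\mid\phi_y\rangle$ from the RKHS structure of $\mathcal H_D$ and the bound $\|\phi_x\|^2=K(x,x)=g_D(0)=V$ (the analogue of $\|f\|_\infty\le V^{1/2}\|f\|_2$ from the first theorem).

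First we show that the series defining $S$ converges and that $S$ is a bounded positive invertible operator. Since $f(y_j)=\langle f\mid\phi_j\rangle$, the frame upper bound says the coefficient sequence $c_j=\langle f\mid\phi_j\rangle$ lies in $\ell^2$, with $\|c\|^2\le B\|f\|^2$.

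Next we consider the synthesis map $T^*:\ell^2\to\mathcal H_D$, $c\mapsto\sum_j c_j\phi_j$. We show that the partial sums are Cauchy: for a finite set $F$ of indices, pair against a unit vector $g$ and apply Cauchy–Schwarz together with the upper frame bound for $g$. This gives
\[
\Bigl\|\sum_{j\in F}c_j\phi_j\Bigr\|\le B^{1/2}\Bigl(\sum_{j\in F}|c_j|^2\Bigr)^{1/2}.
\]
Hence $S=T^*T$ converges in norm and satisfies $\|S\|\le B$. Moreover
\[
\langle Sf\mid f\rangle=\sum_j|\langle f\mid\phi_j\rangle|^2,
\]
so the frame inequalities read $A\,I\le S\le B\,I$. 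Thus $S$ is self-adjoint and positive. Its spectrum lies in $[A,B]$, so $S$ is invertible with $\|S^{-1}\|\le A^{-1}$.

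For the reconstruction formula, we apply the convergent expansion to $S^{-1}f$ in place of $f$, or equivalently use $f=S^{-1}Sf$. By boundedness of $S^{-1}$,
\[
f=\sum_j\langle f\mid\phi_j\rangle\,S^{-1}\phi_j=\sum_j f(y_j)\,S^{-1}\phi_j,
\]
with convergence in norm. This uses that $S^{-1}$ commutes with norm limits of partial sums.

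To pass to pointwise values, we evaluate at $x$ using the continuity of evaluation: $|h(x)|\le V^{1/2}\|h\|$. Norm convergence in $\mathcal H_D$ then gives convergence uniformly on all of $G$, not only on compacts. Note that the evaluation bound needs $K(x,x)=g_D(0)=\nu(D)$, which is finite.

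The main obstacle is the word \emph{absolutely} in the last claim. I would handle it by writing
\[
\sum_j|f(y_j)|\,|(S^{-1}\phi_j)(x)|\le\Bigl(\sum_j|f(y_j)|^2\Bigr)^{1/2}\Bigl(\sum_j|(S^{-1}\phi_j)(x)|^2\Bigr)^{1/2}.
\]
The first factor is at most $B^{1/2}\|f\|$. For the second factor, self-adjointness of $S^{-1}$ gives
\[
(S^{-1}\phi_j)(x)=\langle S^{-1}\phi_j\mid\phi_x\rangle=\overline{\langle S^{-1}\phi_x\mid\phi_j\rangle}.
\]
Applying the upper frame bound to the vector $S^{-1}\phi_x$, the second sum is at most
\[
B\,\|S^{-1}\phi_x\|^2\le BA^{-2}V.
\]
This yields absolute convergence with a uniform bound. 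Uniform convergence of the tails follows the same way: the tail of the first factor, $\sum_{j>N}|f(y_j)|^2$, tends to zero independently of $x$, while the second factor stays uniformly bounded. This gives absolute, uniform convergence on all of $G$, and in particular on compact subsets.
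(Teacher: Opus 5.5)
Your proof is correct and follows the same route as the paper: the frame operator $S=C^*C$ with $AI\le S\le BI$, reconstruction from $f=S^{-1}Sf$, and Cauchy--Schwarz with $\sum_j|(S^{-1}\phi_j)(x)|^2=\|CS^{-1}\phi_x\|^2$ bounded by the upper frame bound. Your constant $BA^{-2}V$ is the right one (the paper's $B^{1/2}A^{-1}$ is a slip), and because $\|\phi_x\|^2=g_D(0)=V$ does not depend on $x$ you rightly get uniform convergence on all of $G$; the one inaccuracy is calling ``apply the expansion to $S^{-1}f$'' equivalent to using $f=S^{-1}Sf$, since it gives the other reconstruction $f=\sum_j (S^{-1}f)(y_j)\,\phi_j$.
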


\begin{proof}
Let $C\colon\mathcal H_D\to l^2(\N)$ be the sampling map $C(f)=(f(y_j))_j$. Then $S=C^*C$ is bounded and positive, and the frame bounds imply the inequalities $AI\le S\le BI$ between self-asdjoint operators. Hence $S$ is invertible with bounded inverse, and the reconstruction formula follows.

For the second series \eqref{uniform} we have

\begin{align*}
\sum_j\vert u_j(x))\vert ^2&=\sum_j\vert<\phi_x\vert u_j>\vert ^2\\
&=\sum_j\vert<\phi_x\vert S^{-1}(\phi_j)>\vert^2\\
&=\sum_j\vert S^{-1}\phi_x(y_j)\vert^2t\\
&=\vert\vert CS^{-1}\phi_x\vert\vert^2S
\end{align*}
which is bounded by  $\vert\vert CS^{-1}\vert\vert^2\dot vert\vert\phi_x\vert\vert^2$, hence by $B^{1/2}A^{-1}\vert\vert\phi_x\vert\vert^2$

Then by Cauchy-Schwartz we have

\begin{equation}
\vert\sum_j\,f(v_j)u_j(x)\vert^2\leq\bigl(\sum_j\vert gf(y_j)\vert^2\bigr)\cdot\bigl( \sum_j\vert u_j(x)\vert^2\leq b^{1/2}A^{-1}\vert\vert \phi_x\vert\vert^2
\end{equation}
\end{proof}

Since $vert\vert \phi_x\vert\vert$ is a continuous function of $x\in G$, it follows that the interpolation series
$\sum_j\,f(v_j)u_j(x)$ converges absolutely,uniformly on compact subsets of $G$.

It seems worth asking if frames consisting of coherent states can be found for domains of interest
\printbibliography

@article{P-M,
  title={Sampling and reconstruction of wave-number-limited functions in N-dimensional Euclidean spaces},
  author={Petersen, Daniel P and Middleton, David},
  journal={Information and control},
  volume={5},
  number={4},
  pages={279--323},
  year={1962},
  publisher={Elsevier}
}

@book{lan,
  author  = {N. P.Landsman},
  title   = {Mathematical Topics Between Classical and Quantum},
  publisher = {Springer Verlag},
  year    = {1998},}

@book{S-S,
  author  = {B. Scholkopf and A. Smola},
  title   = {Learning with Kernels: Support Vector Machines, Regularization, Optimization, and Beyond},
  publisher = {MIT Press},
  year    = {2001},}

@book{Whit,
  author  = {J.M. Whittaker},
  title   = {Interpolatory Functibn Theory},
  publisher = {Cambridge Press},
  year    = {1935},}

@book{gelfand,
  title={Generalized Functions: Applications of harmonic analysis, by IM Gelʹfand and N. Ya. Vilenkin, translated by A. Feinstein},
  author={Gelfand, Izrail Moiseevich and Shilov, Georgi Evgenevich},
  volume={4},
  year={1964},
  publisher={Academic Press}
}

@article{kluvanek,
title= {Sampling theorem in abstract harmonic analysis},
author={I. Kluvanek},
journal={Matematitcko-fyzikalny Casopsis},
volume={15},
year={1965},
pages={43-48}
}

@article{faridani,
author={A. Faridani},
title={A generalized sampling theorem for locally compact abelian groups},
journal={Mathematics of computation 63.207},
volume={63.207},
year={1994},
pages={307-327}
}

@article{shannon,
author={Shannon, Claude E},
title={Communication in the presence of noise},
jourtnal={Proceedings of the IRE},
volume={37},
year={1949},
pages={10-21}
}

@article{K-N-O,
  title={A set with no Riesz basis of exponentials},
  author={Kozma, Gady and Nitzan, Shahaf and Olevskii, Alexander},
  journal={Revista matem{\'a}tica iberoamericana},
  volume={39},
  number={6},
  year={2023},
  publisher={Revista Matem{\'a}tica Iberoamericana}
}

@article{I-K-T,
author={Iosevich  Alex, Nets Hawk Katz, and Terry Tao},
title={Convex bodies with a point of curvature do not have Fourier bases},
journal={American journal of mathematics},
volume={123},
year={2001},
pages={115-120}
}
\end{document}